\newtheorem{thm}{Theorem}[section]
\newtheorem{prop}[thm]{Proposition}
\newtheorem{lemma}[thm]{Lemma}
\newtheorem{defn}[thm]{Definition}
\theoremstyle{remark}
\newtheorem{remark}[thm]{Remark}
\theoremstyle{remarks}
\newtheorem{remarks}[thm]{Remarks}
\numberwithin{equation}{section}
\newcommand{\R}{\mathbb R}
\newcommand{\T}{\mathbb T}
\newcommand{\Sp}{\mathbb S}
\newcommand{\eps}{\varepsilon}
\newcommand{\grad} {\nabla}
\newcommand{\dd} {\; \mathrm{d}}
\newcommand{\dss}{\displaystyle}
\DeclareMathOperator{\supp}{supp}
\DeclareMathOperator{\PV}{\mbox{p.v.}}
\title{Gaussian lower bounds for the Boltzmann equation without cut-off}
\author{Cyril Imbert} \address[C.~Imbert]{CNRS \& Department of
  Mathematics and Applications,
  \'Ecole Normale Sup\'erieure (Paris) \\
  45 rue d'Ulm, 75005 Paris, France} \email{Cyril.Imbert@ens.fr}
\author{Cl\'ement Mouhot} \address[C.~Mouhot]{University of Cambridge,
  DPMMS, Centre for Mathematical Sciences, Wilberforce road, Cambridge
  CB3 0WA, UK} \email{C.Mouhot@dpmms.cam.ac.uk}
\author{Luis Silvestre} \thanks{LS is supported in part by NSF grants
  DMS-1254332 and DMS-1362525. CM is partially supported by ERC grant
  MAFRAN} \address[L.~Silvestre]{Mathematics Department, University of
  Chicago, Chicago, Illinois 60637, USA}
\email{luis@math.uchicago.edu}
\date{\today}
\begin{document}

\begin{abstract}
  The study of positivity of solutions to the Boltzmann equation goes
  back to Carleman~\cite{MR1555365}, and the initial argument of
  Carleman was developed in
  \cite{PuWe97,MR2153518,MR3375544,MR3356579}, but the appearance of a
  lower bound with Gaussian decay had remained an open question for
  long-range interactions (the so-called non-cutoff collision
  kernels). We answer this question and establish such Gaussian lower
  bound for solutions to the Boltzmann equation without cutoff, in the
  case of hard and moderately soft potentials, with spatial periodic
  conditions, and under the sole assumption that hydrodynamic
  quantities (local mass, local energy and local entropy density)
  remain bounded. The paper is mostly self-contained, apart from the
  $L^\infty$ upper bound and weak Harnack inequality on the solution established
  respectively in~\cite{luis,imbert2018decay} and~\cite{is}.
\end{abstract}

\maketitle

\setcounter{tocdepth}{1}
\tableofcontents

\section{Introduction}

\subsection{The Boltzmann equation}

We consider the \textit{Boltzmann equation}
(\cite{Maxwell1867,Boltzmann-1872})
\begin{equation}\label{eq:boltzmann}
  \partial_t f + v \cdot \grad_x f = Q (f,f)
\end{equation}
on a given time interval $I = [0,T]$, $T \in (0,+\infty]$,
$x$ in the flat torus $\mathbb{T}^d$, and $v \in \R^d$.

The unknown $f = f(t,x,v) \ge 0$ represents the time-dependent
probability density of particles in the phase space, and $Q(f,f)$ is
the \textit{collision operator}, i.e. a quadratic integral operator
modelling the interaction between particles:
\[ 
  Q (f,f) = \int_{\R^d} \int_{\Sp} \left[ f(v'_*)f(v') - f(v_*)f(v)
  \right] B(|v-v_*|,\cos \theta) \dd v_* \dd \sigma
\] 
where the pre-collisional velocities $v_*'$ and $v'$ are given by
\[ 
  v' = \frac{v+v_*}2 + \frac{|v-v_*|}2 \sigma \quad \text{ and } \quad
  v'_* = \frac{v+v_*}2 - \frac{|v-v_*|}2 \sigma
\] 
and the \textit{deviation angle} $\theta$ is defined by (see
Figure~\ref{fig:collision})
\[
  \cos \theta := \frac{v-v_*}{|v-v_*|} \cdot \sigma \qquad \left(
    \text{ and } \quad \sin (\theta/2) := \frac{v'-v}{|v'-v|} \cdot
    \sigma = \omega \cdot \sigma \right) .
\]

\begin{figure}
	\includegraphics[height=6.5cm]{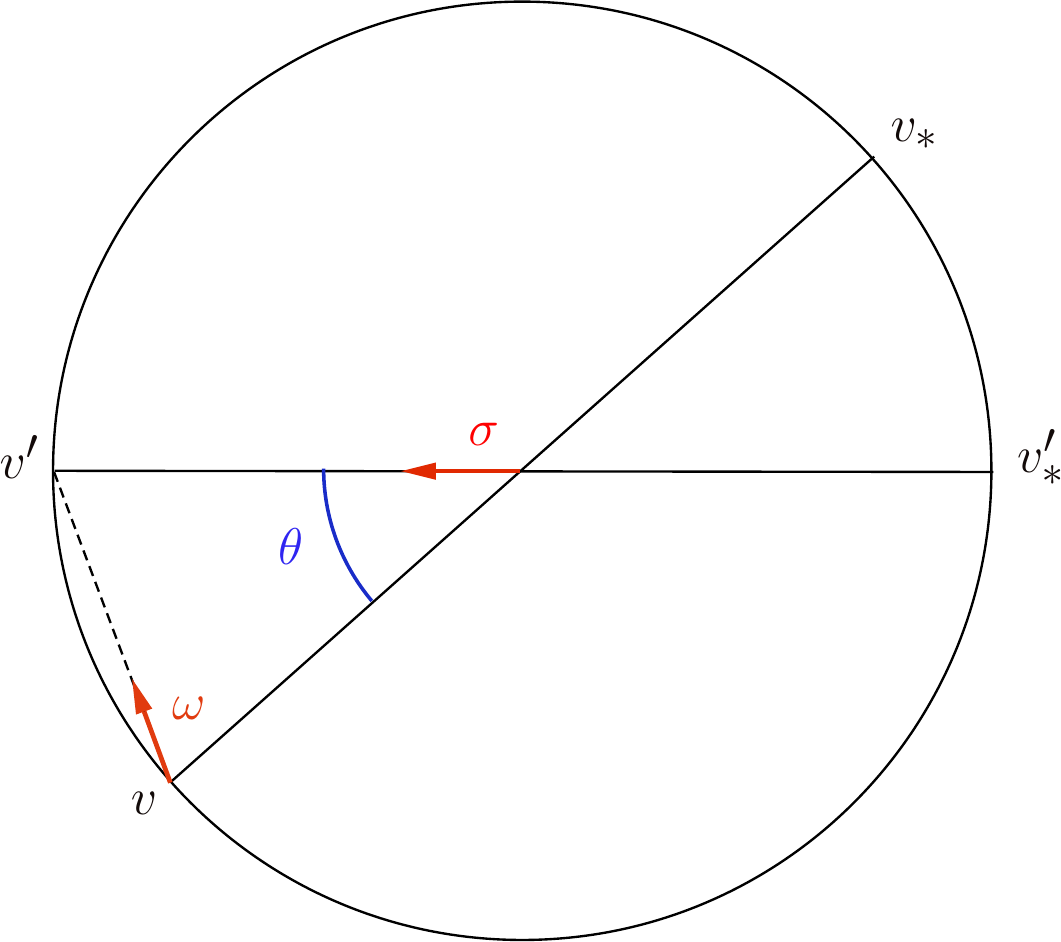}
	\caption{The geometry of the binary collision.}
	\label{fig:collision}
\end{figure}

It is known since Maxwell~\cite{Maxwell1867} that as soon as the
interaction between particles is long-range, the so-called
\textit{grazing collisions} are predominant, and this results in a
singularity of the collision kernel $B$ at small $\theta$. In
particular when particles interact microscopically via repulsive
inverse-power law potentials, the kernel $B$ has a non integrable
singularity around $\theta \sim 0$, commonly known as the
\emph{non-cutoff} case, and has the following general product form
\begin{equation}\label{assum:B}
	B(r,\cos \theta) = 
	r^\gamma b(\cos \theta) \quad \text{ with} \quad b (\cos \theta) 
	\approx_{\theta \sim 0} |\theta|^{-(d-1)-2s}
\end{equation}
with $\gamma > -d$ and $s \in (0,1)$. In dimension $d=3$ and for an
inverse-power law potential $\Phi(r) = r^{-\alpha}$ with
$\alpha \in (1,+\infty)$ then the exponents in \eqref{assum:B} are
\[
  \gamma = \frac{\alpha-4}{\alpha} \ \mbox{ and } \ s =
  \frac{1}{\alpha}.
\]
In dimension $d=3$, it is standard terminology to denote \emph{hard
  potentials} the case $\alpha >4$, \emph{Maxwellian molecules} the
case $\alpha =4$, \emph{moderately soft potentials} the case
$\alpha \in (2,4)$ and \emph{very soft potentials} the case
$\alpha \in (1,2)$. By analogy we denote in any dimension $d \ge 2$
\emph{hard potentials} the case $\gamma >0$, \emph{Maxwellian
  molecules} the case $\gamma=0$, \emph{moderately soft potentials}
the case $\gamma <0$ and $\gamma + 2s\in [0,2]$ and \emph{very soft
  potentials} the remaining case $\gamma <0$ and $\gamma + 2s \in
(-d,0)$.

\subsection{The program of conditional regularity}

The Boltzmann equation~\eqref{eq:boltzmann} is the main and oldest
equation of statistical mechanics. It describes the dynamics of a gas
at the \textit{mesoscopic level}, between the \textit{microscopic}
level of the many-particle (and thus very high dimension) dynamical
system following the trajectories of each particle, and the
\textit{macroscopic} level of fluid mechanics governed by Euler and
Navier-Stokes equations.

The dynamical system of Newton equations on each particle is out of
reach mathematically and contains way more information than could be
handled. Regarding the macroscopic level, the well-posedness and
regularity of solutions to the Euler and Navier-Stokes equations are
still poorly understood in dimension $3$ (with or without
incompressibility condition). The state of the art on the Cauchy
problem for the Boltzmann equation is similar to that of the $3$D
incompressible Navier-Stokes equations, which is not surprising given
that the Boltzmann equation ``contains'' the fluid mechanical equations
as formal scaling limits. Faced with this difficulty, Desvillettes and
Villani initiated in \cite{DeVi05} an \textit{a priori} approach,
where solutions with certain properties are assumed to exist and
studied. We follow this approach but refine it by assuming only
controls of natural local hydrodynamic quantities. This means that we
focus on the specifically kinetic aspect of the well-posedness issue.

Our result in this paper is conditional to the following bounds:
\begin{align}
  0 < m_0 \leq \int_{\R^d} f(t,x,v) \dd v
  &\leq M_0, \label{e:mass-density} \\
  \int_{\R^d} f(t,x,v) |v|^2 \dd v
  &\leq E_0, \label{e:energy-density} \\
  \int_{\R^d} f(t,x,v) \log f(t,x,v) \dd v
  &\leq H_0 \label{e:entropy-density}
\end{align}
for some constants $M_0>m_0>0$, $E_0>0$, $H_0>0$.

The first equation~\eqref{e:mass-density} implies that the mass
density is bounded above and below on the spatial domain and that
there is no vaccum. It would be desirable to relax its lower bound
part to only
$\int_{\mathbb T^d \times \R^d} f(t,x,v) \dd x \dd v \ge m_0>0$,
i.e. averaged in space. The equation~\eqref{e:energy-density} implies
that the energy density is bounded above on the spatial domain, and
the equation~\eqref{e:entropy-density} implies that the entropy
density is bounded above on the spatial domain (note that the energy
bound implies that it is also bounded below). These conditions are
satisfied for perturbative solutions close to the Maxwellian
equilibrium (see for instance the recent work~\cite{GMM} and
references therein in the hard spheres case and
\cite{GrSt11,AMUXY-maxw,AMUXY-hard,AMUXY-existence-soft,AMUXY-uniq-soft}
in the non-cutoff case) but it is an oustandingly difficult problem to
prove them in general.

Previous results of conditional regularity include, for the Boltzmann
equation and under the conditions above, the proof of $L^\infty$
bounds in~\cite{luis}, the proof of a weak Harnack inequality and
H\"older continuity in~\cite{is}, the proof of polynomially decaying
upper bounds in~\cite{imbert2018decay}, and the proof of Schauder
estimates to bootstrap higher regularity estimates
in~\cite{2018arXiv181211870I}. In the case of the closely related
Landau equation, which is a nonlinear diffusive approximation of the
Boltzmann equation, the $L^\infty$ bound was proved
in~\cite{luis-landau,gimv}, the Harnack inequality and H\"older
continuity were obtained in~\cite{MR2773175,gimv}, decay estimates
were obtained in~\cite{css}, and Schauder estimates were established
in~\cite{henderson2017c} (see also~\cite{2018arXiv180107891I}). The
interested reader is refereed to the short
review~\cite{2018arXiv180800194M} of the conditional regularity
program.

\subsection{The main result}

Let us define the notion of classical solutions we will use.
\begin{defn}[Classical solutions to the Boltzmann
  equation] \label{d:solutions} Given $T \in (0,+\infty]$, we say that
  a function
  $f : [0,T] \times \mathbb T^d \times \R^d \to [0,+\infty)$ is a
  \textbf{classical solution to the Boltzmann equation
    \eqref{eq:boltzmann}} if
  \begin{itemize}
  \item it is differentiable in $t$ and $x$ and twice
    differentiable in $v$ everywhere;
  \item the equation \eqref{eq:boltzmann} holds classically at every
    point in $[0,T] \times \mathbb T^d \times \R^d$.
  \end{itemize}
\end{defn}

The main result is then:

\begin{thm} \label{t:main} Assume that $\gamma+2s \in [0,2]$ (hard and
  moderately soft potentials). Let $f \geq 0$ be a solution
  to~\eqref{eq:boltzmann} according to Definition~\ref{d:solutions}
  that satisfies the hydrodynamic bounds
  \eqref{e:mass-density}-\eqref{e:energy-density}-\eqref{e:entropy-density}
  for all $t \in [0,T]$ and $x \in \mathbb{T}^d$. Then, there exists
  $a(t)>0$ and $b(t)>0$ depending on $t$, $s$, $\gamma$, $d$, $m_0$,
  $M_0$, $E_0$ and $H_0$ only so that
  \[ \forall \, t >0, \ x \in \T^d, \ v \in \R^d, \quad f(t,x,v) \geq
    a(t) e^{-b(t) |v|^2}.\]
\end{thm}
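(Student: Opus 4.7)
The plan is to adapt the classical Carleman--Pulvirenti--Wennberg--Mouhot--Briant strategy to the non-cutoff setting by combining it with the De Giorgi--Moser theory for kinetic integro-differential equations developed in~\cite{is}. The proof proceeds in three stages: first establish a pointwise lower bound of~$f$ on a small ball in velocity at each $(t,x)$; then spread this lower bound to larger balls via the structure of~$Q(f,f)$; finally iterate and track the constants to obtain the Gaussian tail.

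\textbf{Step 1 (Pointwise lower bound on a ball).} From the mass lower bound $m_0$, the energy upper bound $E_0$, and the a priori $L^\infty$ bound from~\cite{luis}, a pigeonhole argument shows that for every $(t,x)$ there exists $v_*(t,x)$ with $|v_*| \leq R_0$ such that $f(t,x,v_*) \geq c_0 > 0$, with $R_0$ and $c_0$ depending only on the hydrodynamic constants. I would then apply the weak Harnack inequality of~\cite{is}, viewing the Boltzmann equation as a linear kinetic equation $\partial_t f + v \cdot \grad_x f = L_f f$ whose integro-differential operator $L_f$ has coefficients controlled above and below thanks to the hydrodynamic bounds. This converts the single-point positivity into a quantitative lower bound $f(t,x,v) \geq \delta_0 \un_{B_{r_0}(v_0)}$ on a cylinder of positive size.

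\textbf{Step 2 (Spreading lemma).} The core nonlinear step is to show that if $f \geq \delta \un_{B_r(w)}$ at a point $(t_0,x_0,v)$, then at $(t_0+\tau,x_0+\tau v,v)$ a lower bound $f \geq \delta' \un_{B_{r'}(w')}$ holds with $r' = r\sqrt{2}$ and $\delta' \geq c\delta^2$, for some small $\tau > 0$ and $c$ independent of $r$ and $\delta$. In the cutoff case this is Carleman's original observation, driven by the geometry of binary elastic collisions and the positivity of the gain term $Q^+(f,f)$: two particles with velocities in $B_r(w)$ produce post-collisional velocities filling $B_{r\sqrt{2}}(w)$, and convolution with the kernel gives the $\delta^2$ lower bound. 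In the non-cutoff case, I would truncate the angular kernel $b(\cos\theta)$ at a fixed angular scale to recover a ``creative'' non-singular gain contribution mimicking the cutoff argument, while the singular remainder is controlled by the conditional upper bound $\|f\|_\infty$. This yields an inequality of the form $\partial_t f + v\cdot\grad_x f - L f \geq c\delta^2 \un_{B_{r\sqrt{2}}(w)} - C\delta$, which I would combine with a suitable bump barrier and a maximum-principle argument for the fractional kinetic operator to conclude.

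\textbf{Step 3 (Iteration and Gaussian tail).} Iterating Step 2 and choosing the centres $w_n$ along any ray pointing towards an arbitrary target velocity $v$, one obtains lower bounds $f \geq \delta_n \un_{B_{r_n}(w_n)}$ with $r_n = (\sqrt{2})^n r_0$ and $\delta_{n+1} \geq c\,\delta_n^2$. Unwinding the recursion yields $\delta_n \geq c^{2^n-1}\delta_0^{2^n} = \exp(-c' r_n^2/r_0^2)$, which is precisely Gaussian decay in the velocity variable. The doubling times $\tau_n$ must be chosen summable in $n$ to ensure that the full iteration completes in arbitrarily short time, and a final application of the Harnack regularisation absorbs the transport shift back to the target $(t,x)$.

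\textbf{Main obstacle.} The heart of the proof is the non-cutoff spreading lemma of Step~2. In contrast to the cutoff regime, the creation of mass at new velocities cannot simply be read off from a scalar gain term; it must be extracted from the full integro-differential operator $Q(f,f)$, whose leading singular part is a dissipation. The delicate issue is to isolate the creative (non-grazing) collisions and to quantify, uniformly in $r$ and $\delta$, how much mass they transfer onto $B_{r\sqrt{2}}(w)$, while absorbing the singular remainder into the upper bound on $f$ with constants that ultimately produce the $|v|^2$ exponent in the Gaussian.
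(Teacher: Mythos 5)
Your overall architecture — (1) initial plateau on a ball via the weak Harnack inequality of \cite{is}, (2) quadratic spreading lemma, (3) iteration with radius growth by factor $\sqrt 2$ to extract Gaussian decay — is exactly the paper's, and Steps 1 and 3 are essentially right (the paper spreads on concentric balls $B_{R_n}$ with $R_n \sim 2^{n/2}$ rather than moving centres along a ray, but both implement the same $\delta_{n+1} \gtrsim 2^{-Cn}\delta_n^2$ recursion).

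The gap is in Step 2, and it is not a detail: the mechanism you propose is the one the paper is written to replace. You plan to truncate $b(\cos\theta)$ at a fixed angular scale so as to read off a cutoff-type gain $\gtrsim \delta^2$, and to absorb the singular remainder ``controlled by the conditional upper bound $\|f\|_\infty$''. This cannot close under the standing hypotheses. The remainder is a diffusive (fractional-order) operator applied to $f$ itself, and no $L^\infty$ bound controls it; one needs H\"older or $C^2$ regularity of $f$ near the contact point, which is precisely the extra a priori smoothness that \cite{MR2153518} had to assume and that forced the final estimate in that paper to be $\exp(-C|v|^{2+\varepsilon})$ rather than Gaussian. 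The paper flags this explicitly: ``Here instead we use coercivity and sign properties of this singular part of the collision operators.''

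The actual Lemma~\ref{l:unit-spread} keeps the singular part of $Q$ intact and never needs to estimate it on $f$. One drops the nonnegative non-singular part $Q_{ns}$, writes $Q_s$ in Carleman form \eqref{eq:Carl-rep} with nonnegative kernel $K_f$, and compares $f$ against a barrier $\tilde\ell(t)\varphi_{R,\xi}(v)$. At a first crossing point $(t_0,x_0,v_0)$ the function $g := f-\tilde\ell(t_0)\varphi_{R,\xi}$ is $\geq 0$ everywhere and $=0$ at $v_0$, so
\[
  Q_s\bigl(f,\,f-\tilde\ell\varphi_{R,\xi}\bigr)(v_0)
  = \int_{\R^d} g(v')\,K_f(v_0,v')\,\dd v' \ \geq\ 0,
\]
and this principal-value integral becomes a genuine Lebesgue integral of a nonnegative integrand; the grazing collisions contribute nonnegatively rather than as an error. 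Restricting to non-grazing $v'\in B_R$, $v'_*\in B_R$ (where $f\geq \ell$ and $g\geq \ell/2$) produces the quadratic gain $\gtrsim \xi^q R^{d+\gamma}\ell^2$. The only other piece, $\tilde\ell\, Q_s(f,\varphi_{R,\xi})$, is bounded via Lemma~\ref{l:Lphi} using the $C^2$ norm of the barrier $\varphi_{R,\xi}$ (not of $f$), and is exactly cancelled by the ODE satisfied by $\tilde\ell(t)$. No angular truncation, no absorption of a singular remainder, and no regularity of $f$ beyond the classical-solution hypothesis are required. This sign structure at the contact point is the key idea your Step 2 is missing, and without it the iteration either fails to start (for $\delta$ small, any $-C\delta$ loss dominates $c\delta^2$) or requires assumptions incompatible with the theorem as stated.
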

\begin{remarks}
  \begin{enumerate}
  \item The bound does not depend on the size domain of periodicity in
    $x$; this is due to the fact that the hydrodynamic bounds
    \eqref{e:mass-density}-\eqref{e:energy-density}-\eqref{e:entropy-density}
    are uniform in space. The periodicity assumption is made for
    technical conveniency and could most likely be removed.
  \item The requirement $\gamma + 2s \geq 0$ could be relaxed in our
    proof at the expense of assuming $f(t,x,v) \leq K_0$ for some
    constant $K_0>0$ (note that the functions $a(t)$ and $b(t)$ would
    then depend on $K_0$). However when $\gamma+2s <0$, this
    $L^\infty$ bound is only proved when assuming more than
    \eqref{e:mass-density}-\eqref{e:energy-density}-\eqref{e:entropy-density}
    on the solutions, namely some $L^\infty_{t,x}L^p_v$ bounds with
    $p>0$, see \cite{luis}.
  \end{enumerate}
\end{remarks}

\subsection{Previous results of lower bound and comparison}

The emergence and persistence of lower bounds for the Boltzmann
equation is one of the most classical problems in the analysis of
kinetic equations. It is a natural question: it advances the
understanding of how the gas fills up the phase space, and how it
relaxes towards the Maxwellian state; and such lower bounds are
related to coercivity properties of the collision operator.

The study of lower bounds was initiated in the case of short-range
interactions (namely hard spheres) and for spatially homogeneous
solutions: Carleman \cite{MR1555365} proved the generation of lower
bounds of the type $f(t,v) \ge C_1 e^{-C_2|v|^{2+\epsilon}}$ for
constants $C_1,C_2 >0$ and $t \ge t_0>0$, with $\epsilon >0$ as
$t_0 >0$ as small as wanted. He considered classical solutions with
polynomial pointwise estimates of decay (such estimates are also
proved in his paper) and assumed that the initial data has already a
minoration over a ball in velocity. This was later significantly
improved in \cite{PuWe97}: the authors proved in this paper that
spatially homogeneous solutions with finite mass, energy and entropy
are bounded from below by a Maxwellian $C_1 e^{-C_2|v|^2}$, where the
constants depend on the mass, energy and entropy; they obtained the
optimal Maxwellian decay by refining the calculations of Carleman but
also got rid of the minoration assumptions on the initial data through
a clever use of the iterated gain part of the collision kernel. Note
that, in this spatially homogeneous setting and for hard spheres, the
assumption of finite entropy could probably be relaxed in the latter
statement by using the non-concentration estimate on the iterated gain
part of the collision operator proved later in
\cite{MR1697495}. Finally, still for hard spheres, the optimal
Maxwelllian lower bound was extended to spatially inhomogeneous
solutions in the torus satisfying the hydrodynamic bounds
\eqref{e:mass-density}-\eqref{e:energy-density}-\eqref{e:entropy-density}
in \cite{MR2153518}, and to domains with boundaries in
\cite{MR3375544,MR3356579}. The paper \cite{MR2153518} also proved the
first lower bounds in the non-cutoff case, however they were poorer
than Maxwellian ($C_1 e^{-C_2|v|^\beta}$ with $\beta >2$ not
necessarily close to $2$) and required considerably stronger a priori
assumptions on the solutions than
\eqref{e:mass-density}-\eqref{e:energy-density}-\eqref{e:entropy-density}. The
latter point is due to the fact that the proof of the lower bound in
\cite{MR2153518} in the non-cutoff case is based on a decomposition of
the collision between grazing and non-grazing collisions and treating
the former as mere error terms. Thus, Theorem \ref{t:main} is a
significant improvement over the result of \cite{MR2153518} in the
non-cutoff case. Our proof here uses coercivity properties at grazing
collisions, as pioneered by \cite{luis}, instead of treating them as
error terms.

\subsection{Method of proof}

The well established pattern for proving lower bounds goes back to
Carleman \cite{MR1555365} and follows the collision process: (1)
establish a minoration on a ball on the time interval considered, (2)
spread iteratively this lower bound through the collision process,
i.e. using coercivity properties of the collision operator. The step
(1), i.e. the minoration on a ball $v \in B_1$, is deduced here from
the weak Harnack inequality as in \cite{is}, see
Proposition~\ref{p:localLB} below (and \cite[Theorem~1.3]{is}). The
spreading argument of step (2) is then performed in
Lemma~\ref{l:unit-spread}. The geometric construction in Lemma
\ref{l:unit-spread} resembles the iterative spreading of lower bounds
in the cut-off case, as in \cite{MR2153518}. The key difference is the
way we handle the singularity in the integral kernel. In
\cite{MR2153518}, a priori assumptions of smoothness of the solution
are used to remove a neighbourhood around $\theta=0$ in the collision
integral and treat it as an error term. Here instead we use coercivity
and sign properties of this singular part of the collision operators,
as developed and used in
\cite{luis,2018arXiv181211870I,imbert2018decay}; because of the
fractional derivative involved we use also a barrier method to justify
the argument; it is inspired from \cite{silvestre2014regularity}, and
was recently applied to the Boltzmann equation in
\cite{luis,imbert2018decay}.

\subsection{A note on weak solutions}

No well-posedness results are known for the Boltzmann equation without
perturbative conditions or special symmetry. This is true both for
strong and weak notions of solutions. As far as the \emph{existence}
is concerned, the unconditional existence of solutions is only known
for \emph{renormalized solutions with defect measure},
see~\cite{av2002}. Current results on \emph{uniqueness} of solutions
require significant regularity assumptions (see for example
\cite{amuxy-uniqueness}). It is thus not surprising that it is rather
inconvenient to prove estimates for the inhomogeneous Boltzmann
equation in any context other than that of classical solutions. Our
main result in Theorem \ref{t:main} is presented as an a priori
estimate on classical solutions. The estimate does not depend
quantitatively on the smoothness of the solution $f$. Some
computations in the proof however require a qualitative smoothness
assumption of $f$ so the quantities involved make sense.

Let us be more precise, and discuss the two parts of the proof of
Theorem~\ref{t:main} described in the previous section. Part~(1) is
established thanks to the weak Harnack inequality from \cite{is} (see
Proposition \ref{p:localLB}). The qualitative conditions necessary for
this step, as stated in \cite{is}, is that
$f \in L^2([0,T] \times \mathbb T^d, L^\infty_{loc} \cap
H^s_{loc}(\R^d))$ solves the equation \eqref{eq:boltzmann} in the
sense of distributions. Part~(2) consists in expanding the lower bound
from $v \in B_1$ to larger values of $|v|$ and is based on comparison
principles with certain barrier functions. The notion of solution that
is compatible with these methods is that of \emph{viscosity
  super-solutions}. In this context, it would be defined in the
following way. Denote $\underline f$ the lower semicontinuous envelope
of $f$. We say a function
$f: C([0,T] \times \mathbb T^d, L^1_2(\R^d))$ is a viscosity
super-solution of \eqref{eq:boltzmann} if whenever there is a $C^2$
function $\varphi$ for which $\underline f - \varphi$ attains a local
minimum at some point
$(t_0,x_0,v_0) \in (0,T] \times \mathbb T^d \times \R^d$, then the
following inequality holds 
\begin{align*} 
  (\varphi_t + v \cdot \nabla_x \varphi)(t_0,x_0,v_0)
  &\geq \int_{B_\eps(v_0)} (\varphi(t_0,x_0,v') - \varphi(t_0,x_0,v_0)) K_f(t_0,x_0,v_0,v') \dd v'\\
  &+ \int_{\R^d \setminus B_\eps(v_0)} (f(t_0,x_0,v') - f(t_0,x_0,v_0)) K_f(t_0,x_0,v_0,v') \dd v'\\
  & + Q_{ns}(f,\varphi)(t_0,x_0,v_0).
\end{align*}
Here $K_f$ is the Boltzmann kernel written in \eqref{eq:K}, $Q_{ns}$
is the non-singular term written below in \eqref{e:Qns}, $\eps>0$ is
any small number so that the minimum of $\underline f - \varphi$ in
$B_\eps(v_0)$ is attained at $v_0$. Following the methods
in~\cite{barles2008second} or \cite{silvestre2014regularity} for
instance, one should have no problem reproducing our proofs in this
paper in the context of such viscosity solutions. 

It is unclear how the notion of viscosity solution compares with the
notion of renormalized solutions. We are not aware of any work on
viscosity solutions in the context of the Boltzmann equation. If one
tries to adapt the proofs in this paper to the renormalized solutions
with defect measure of~\cite{MR1857879}, it seems that one would face
serious technical difficulties. In order to keep this paper cleaner,
and to make it readable for the largest possible audience, we believe
that it is most convenient to restrict our analysis to classical
solutions.

\subsection{Plan of the paper}

Section~\ref{sec:preliminaries} introduces the decomposition of the
collision operator adapted to the non-cutoff setting and recalls key
estimates on it. Section~\ref{sec:lower-bounds} proves the main
statement: we first recall the result of \cite{is} providing a
minoration on a ball, then introduce our new argument for the
spreading step, and finally complete the proof that follows readily
from the two latter estimates.

\subsection{Notation}

We denote $a \lesssim b$ (respectively $a \gtrsim b$) for $a \le C b$
(respectively $a \ge C b$) when the constant $C>0$ is independent from
the parameters of the calculation; when it depends on such parameters
it is indicated as an index, such as $a \lesssim_{M_0} b$. We denote
$B_R(v_0)$ the ball of $\R^d$ centered at $v_0$ and with radius $R$,
and we omit writing the center when it is $0$, as in $B_R=B_R(0)$. 

\section{Preliminaries}
\label{sec:preliminaries}

\subsection{Decomposition of the collision operator}

It is standard since the discovery of the so-called ``cancellation
lemma'' \cite{advw} to decompose the Bolzmann collision operator
$Q(f,f)$ into singular and non-singular parts as follows: 
\begin{align*}
  & Q (f_1,f_2)(v) \\
  & = \int_{\R^d \times \Sp} \Big[ f_1(v'_*) f_2(v') - f_1(v_*) f_2(v) \Big] B \dd v_* \dd \sigma \\
  & = \int f_1(v'_*) \big[ f_2(v')-f(v) \big] B \dd v_*
    \dd \sigma + f_2(v) \int \big[ f_1(v'_*) - f_1(v_*)
               \big] B \dd v_* \dd \sigma \\
  & =: Q_s(f_1,f_2) + Q_{ns}(f_1,f_2)
\end{align*}
where ``s'' stands for ``singular'' and ``ns'' stands for
``non-singular''. The part $Q_{ns}$ is indeed non-singular: Given
$v \in \R^d$, the change of variables
$(v_*,\sigma) \mapsto (v'_*,\sigma)$ has Jacobian
${\rm d} v'_* {\rm d}\sigma = 2^{d-1} (\cos \theta/2)^2 \, {\rm d} v_*
{\rm d}\sigma$, which yields (same calculation as
\cite[Lemma~1]{advw})
\begin{equation} \label{e:Qns}
  Q_{ns}(f_1,f_2)(v)  = f_2(v) \int_{\R^d} \int_{\Sp} \big[ f_1(v'_*) - f_1(v_*)
            \big] B \dd v_* \dd \sigma 
               =: f_2(v) (f_1 * S)(v)
\end{equation}
with
\begin{align*}
  S(u) & := \left|\mathbb S^{d-2}\right| \int_0 ^{\frac{\pi}{2}} (\sin
         \theta)^{d-2} \Big[ (\cos \theta/2)^{-d}
         B\left( \frac{|u|}{\cos
         \theta/2}, \cos \theta \right) - B(|u|,\cos \theta) \Big]
         \dd \theta\\
       & = \left|\mathbb S^{d-2}\right| |u|^\gamma
         \int_0 ^{\frac{\pi}{2}} (\sin
         \theta)^{d-2} \Big[ (\cos \theta/2)^{-d-\gamma}
         - 1 \Big]  b(\cos \theta)
         \dd \theta \\
       & =: C_S |u|^\gamma
\end{align*}
where we have used the precise form~\eqref{assum:B} of the collision
kernel in the second line. The constant $C_S >0$ is finite, positive
and only depends on $b$, $d$, and $\gamma$. The first term $Q_s$ is an
elliptic non-local integral operator of order $2s$ (see
\cite{advw,luis} and the many other subsequent works revealing this
fact), and the second term $Q_{ns}(f,f)$ is a lower order term that
happens to be nonnegative.

Observe that $Q_{ns} \geq 0$, and thus we can remove this lower order
term and the function $f$ is a supersolution of the following
equation,
\begin{equation} \label{e:Q1only}
 f_t + v \cdot \nabla_x f \geq Q_s(f,f),
\end{equation}
where
\[ Q_s(f,f)(v) = \int_{\R^d \times \mathbb S^{d-1}} \big[f_2(v') -
  f_2(v)\big] f_1(v'_*) b(\cos \theta) \dd v_* \dd \sigma.\] We change
variables (Carleman representation~\cite{MR1555365}) according to
$(v_*,\sigma) \mapsto (v',v'_*)$, with Jacobian
${\rm d} v_* {\rm d}\sigma = 2^{d-1} |v-v'|^{-1} |v-v_*|^{-(d-2)} \,
{\rm d} v' {\rm d} v'_*$ (see for instance \cite[Lemma~A.1]{luis}) and
we deduce
  \begin{equation}\label{eq:Carl-rep}
  Q_s(f_1,f_2)(v) = \PV \int_{\R^d} K_{f_1}(v,v') \big[ f_2(v')-f_2(v) \big] \dd v',
  \end{equation}
  where
  \begin{align} \nonumber
  K_{f_1}(t,x,v,v') & := \frac{2^{d-1}}{|v'-v|} \int_{v'_* \in \; v +
  	(v'-v)^\bot} f_1(t,x,v'_*) |v-v_*|^{\gamma-(d-2)} b(\cos \theta) \dd
  v'_* \\  \label{eq:K}
  & := \frac{1}{|v'-v|^{d+2s}} \int_{v'_* \in \; v +
  	(v'-v)^\bot} f_1(t,x,v'_*) |v-v'_*|^{\gamma+2s+1} \tilde b(\cos \theta) \dd
    v'_* 
  \end{align}
  and where we have used the assumption \eqref{assum:B} to write
\[
  2^{d-1} b(\cos \theta) = |v-v'|^{-(d-1)-2s} |v-v_*|^{(d-2)-\gamma}
  |v-v'_*|^{\gamma+2s+1} \tilde b(\cos \theta)
\]
with $\tilde b$ smooth on $[0,\pi]$ and stricly positive on
$[0,\pi)$. The notation $\PV$ denotes the Cauchy principal value
around the point $v$. It is needed only when $s \in [1/2,1)$.

\subsection{Estimates on the collision operator}

We start with a simple estimate from above the kernel $K_f$ as in
\cite[Lemma~4.3]{luis}.
\begin{prop}[Upper bounds for the kernel] \label{p:Kupper-bound}
  For any $r>0$, the following inequality holds
  \[
    \forall \, t \in [0,\infty), \ x \in \T^d,  \ v \in \R^d,
    \quad \left\{
      \begin{array}{l} \dss 
        \int_{B_r(v)} |v-v'|^2 K_f(t,x,v,v') \dd v' \lesssim
        \Lambda(t,x,v) r^{2-2s}, \\[3mm] \dss
            \int_{\R^d \setminus B_r(v)} K_f(t,x,v,v') \dd v' \lesssim
        \Lambda(t,x,v) r^{-2s}
      \end{array}
    \right.
  \]
  where
  \[
    \Lambda(t,x,v) = \int_{\R^d} f(t,x,v'_*) |v-v'_*|^{\gamma+2s} \dd v'_*.
  \]
\end{prop}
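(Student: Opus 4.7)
The plan is to work directly from the Carleman form \eqref{eq:K} of the kernel and pass to spherical coordinates in $v'$, writing $v' = v + \rho \omega$ with $\rho > 0$ and $\omega \in \Sp^{d-1}$. Setting $w = v'_* - v \in \omega^\bot$, the kernel becomes
\[
K_f(v, v+\rho\omega) = \frac{1}{\rho^{d+2s}} \int_{\omega^\bot} f(v+w) |w|^{\gamma+2s+1} \tilde b(\cos\theta) \dd w.
\]
A brief inspection of the Carleman geometry (namely $|v'-v| = |v-v_*|\sin(\theta/2)$ and $|v-v'_*| = |v-v_*|\cos(\theta/2)$) gives $\tan(\theta/2) = \rho/|w|$, so $\tilde b(\cos\theta)$ depends only on $\rho$ and $|w|$, not on $\omega$ or the direction of $w$ within $\omega^\bot$. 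After inserting this expression, each integral factors as a radial integral in $\rho$ times the angular quantity
\[
I(\rho) := \int_{\Sp^{d-1}} \int_{\omega^\bot} f(v+w) |w|^{\gamma+2s+1} \tilde b(\cos\theta) \dd w \dd\omega.
\]

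The crux is to show that $I(\rho) \lesssim \Lambda(t,x,v)$ \emph{uniformly} in $\rho$. For this I would exchange the order of integration on the $(2d-2)$-dimensional manifold $\{(\omega, w) \in \Sp^{d-1} \times (\R^d \setminus \{0\}) : \omega \perp w\}$, swapping the fibration $\omega \mapsto \omega^\bot$ for the fibration $w \mapsto \Sp^{d-1} \cap (w/|w|)^\bot$. A short Jacobian computation, which one can verify by testing against the indicator of a ball, gives
\[
\int_{\Sp^{d-1}} \dd\omega \int_{\omega^\bot} G(w) \dd w = |\Sp^{d-2}| \int_{\R^d} \frac{G(w)}{|w|} \dd w
\]
for any nonnegative $G$ depending only on $w$. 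Applying this to $G(w) = f(v+w) |w|^{\gamma+2s+1} \tilde b(\cos\theta)$ at fixed $\rho$, together with the boundedness of $\tilde b$ on $[0,\pi]$, yields $I(\rho) \lesssim \int_{\R^d} f(v+w) |w|^{\gamma+2s} \dd w = \Lambda(t,x,v)$.

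It then remains to perform the radial integration. For the first bound we collect $\int_0^r \rho^{d-1} \cdot \rho^2 \cdot \rho^{-d-2s} \dd\rho = \int_0^r \rho^{1-2s} \dd\rho \simeq r^{2-2s}$ (finite since $s<1$), and for the second bound $\int_r^\infty \rho^{d-1} \cdot \rho^{-d-2s} \dd\rho = \int_r^\infty \rho^{-1-2s}\dd\rho \simeq r^{-2s}$. The only subtle point is the $1/|w|$ Jacobian in the Fubini swap: precisely this factor downgrades the weight $|v-v'_*|^{\gamma+2s+1}$ in the Carleman integrand to the weight $|v-v'_*|^{\gamma+2s}$ appearing in $\Lambda$. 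Everything else is a straightforward bookkeeping exercise.
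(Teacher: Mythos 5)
Your proof is correct and follows essentially the same route as the paper: pass to polar coordinates in $v'$ to peel off the radial integral $\int \rho^{1-2s}\,\mathrm{d}\rho$ (resp.\ $\int \rho^{-1-2s}\,\mathrm{d}\rho$), then use the co-area/Fubini identity $\int_{\Sp^{d-1}}\int_{\omega^\bot} G(w)\,\mathrm{d}w\,\mathrm{d}\omega = |\Sp^{d-2}|\int_{\R^d} G(w)|w|^{-1}\,\mathrm{d}w$ together with boundedness of $\tilde b$ to control the angular part by $\Lambda(t,x,v)$. The paper carries out this same swap using a $\delta_0(\omega\cdot\tilde\omega)$ bookkeeping device rather than stating the plane--sphere Fubini identity explicitly, but the computation and the $1/|w|$ weight drop from $\gamma+2s+1$ to $\gamma+2s$ are identical.
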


\begin{remark}\label{rk:Lambda}
  Note that if $\gamma+2s \in [0,2]$ and
  (\ref{e:mass-density}-\ref{e:entropy-density}) hold, then
\[
  \Lambda(t,x,v) \lesssim_{M_0,E_0} (1+|v|)^{\gamma+2s}.
\]
\end{remark}
\begin{proof}[Proof of Proposition~\ref{p:Kupper-bound}]
  To prove the first inequality we write (omitting $t,x$)
  \begin{align*}
    & \int_{B_r(v)} |v-v'|^2 K_f(v,v') \dd v'\\
    & = \int_{B_r(v)} |v-v'|^{2-d-2s} \int_{v'_* \in \; v +
      (v'-v)^\bot} f(v'_*) |v-v'_*|^{\gamma+2s+1} \tilde b(\cos \theta) \dd
      v'_*  \dd v' \\
    & = \int_{\omega \in \Sp^{d-1}} \left( \int_{u=0} ^r u^{1-2s} \dd u
      \right)
      \int_{v'_* \in \; v +
      (v'-v)^\bot} f(v'_*) |v-v'_*|^{\gamma+2s+1} \tilde b(\cos \theta) \dd
      v'_*  \dd v' \\
    & \lesssim r^{2-2s}  \int_{\omega \in \Sp^{d-1}}  \int_{\tilde \omega \in
      \Sp^{d-1}} \delta_0(\omega \cdot \tilde \omega) \int_{\tilde u
      = 0} ^{+\infty} \tilde u ^{d-1+\gamma+2s} f(\tilde u \tilde
      \omega) \dd \tilde u \dd \tilde \omega \dd \omega \\
    & \lesssim r^{2-2s} \int_{v'_* \in \R^d} f(v'_*) |v-v'_*|^{\gamma+2s} \dd v'_*.
  \end{align*}
  The proof of the second inequality is similar:
  \begin{align*}
    & \int_{\R^d \setminus B_r(v)} K_f(v,v') \dd v'\\
    & = \int_{\R^d \setminus B_r(v)} |v-v'|^{-d-2s} \int_{v'_* \in \; v +
      (v'-v)^\bot} f(v'_*) |v-v'_*|^{\gamma+2s+1} \tilde b(\cos \theta) \dd
      v'_*  \dd v' \\
    & = \int_{\omega \in \Sp^{d-1}} \left( \int_{u=r} ^{+\infty} u^{-1-2s} \dd u
      \right)
      \int_{v'_* \in \; v +
      (v'-v)^\bot} f(v'_*) |v-v'_*|^{\gamma+2s+1} \tilde b(\cos \theta) \dd
      v'_*  \dd v' \\
    & \lesssim r^{-2s}  \int_{\omega \in \Sp^{d-1}}  \int_{\tilde \omega \in
      \Sp^{d-1}} \delta_0(\omega \cdot \tilde \omega) \int_{\tilde u
      = 0} ^{+\infty} \tilde u ^{d-1+\gamma+2s} f(\tilde u \tilde
      \omega) \dd \tilde u \dd \tilde \omega \dd \omega \\
    & \lesssim r^{-2s} \int_{v'_* \in \R^d} f(v'_*) |v-v'_*|^{\gamma+2s} \dd v'_*.
  \end{align*}
  This concludes the proof.
\end{proof}

The latter bounds are useful to estimate $Q_s(f,\varphi)$ for a $C^2$
barrier function $\varphi$.

\begin{lemma}[Upper bound for the linear Boltzmann
  operator] \label{l:Lphi} Let $\varphi$ be a bounded, $C^2$ function
  in $\R^d$. The following inequality holds
  \[
    |Q_s(f,\varphi)| = \left\vert \mbox{\emph{p.v.}} \int_{\R^d} \big[\varphi(v') -
      \varphi(v)\big] K_f(t,x,v,v') \dd v' \right\vert \leq \Lambda(t,x,v)
    \|\varphi\|_{L^\infty(\R^d)}^{1-s} [\varphi]_{\dot C^2(\R^d)}^s
  \]
  where $\Lambda(t,x,v)$ is the same quantity as in Proposition
  \ref{p:Kupper-bound} and
  \[
    [\varphi]_{\dot C^2(\R^d)} := \sup_{v' \neq v} \frac{|\varphi(v') -
      \varphi(v) - (v'-v) \cdot \nabla \varphi(v)|}{|v'-v|^2} \lesssim
    \|\nabla^2 \varphi\|_{L^\infty(\R^d)}.
  \]
\end{lemma}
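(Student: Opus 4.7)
The plan is a standard split at a radius $r$ to be optimized, combined with the kernel symmetry of $K_f(v,\cdot)$ about $v$ that makes the principal value meaningful and cancels first-order terms.

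First I would verify the key symmetry $K_f(t,x,v,v') = K_f(t,x,v,2v-v')$. This follows from the Carleman representation \eqref{eq:K}: the hyperplane $v+(v'-v)^\perp$ over which $v'_*$ is integrated is the same for $v'$ and for $2v-v'$, the weight $|v-v'_*|^{\gamma+2s+1}$ does not depend on $v'$ (only on $v'_*$), the prefactor $|v-v'|^{-(d+2s)}$ is invariant under $v' \mapsto 2v-v'$, and the deviation angle $\theta$ appearing in $\tilde b$ depends only on the configuration through quantities invariant under this reflection. Consequently, in the principal value sense,
\[
\text{p.v.}\int_{B_r(v)} (v'-v)\cdot \nabla\varphi(v)\, K_f(v,v') \dd v' = 0.
\]

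Next I would split the integral at radius $r>0$. On $B_r(v)$, using the cancellation above and the pointwise bound $|\varphi(v')-\varphi(v)-(v'-v)\cdot \nabla\varphi(v)| \le [\varphi]_{C^2(v)} |v'-v|^2$, the first estimate of Proposition~\ref{p:Kupper-bound} gives
\[
\left|\text{p.v.}\int_{B_r(v)} [\varphi(v')-\varphi(v)]\,K_f \dd v'\right|
\le [\varphi]_{C^2(v)} \int_{B_r(v)} |v'-v|^2 K_f \dd v'
\lesssim \Lambda(t,x,v)\,[\varphi]_{C^2(v)}\, r^{2-2s}.
\]
On the complement, using $|\varphi(v')-\varphi(v)| \le 2\|\varphi\|_{L^\infty}$ and the second estimate of Proposition~\ref{p:Kupper-bound},
\[
\left|\int_{\R^d\setminus B_r(v)} [\varphi(v')-\varphi(v)]\,K_f \dd v'\right|
\lesssim \Lambda(t,x,v)\,\|\varphi\|_{L^\infty}\, r^{-2s}.
\]

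Finally I would optimize in $r$. The sum of the two bounds is
$\Lambda \big([\varphi]_{C^2(v)} r^{2-2s} + \|\varphi\|_{L^\infty} r^{-2s}\big)$, minimized at $r^2 = \|\varphi\|_{L^\infty}/[\varphi]_{C^2(v)}$, and the minimum value is proportional to $\Lambda \|\varphi\|_{L^\infty}^{1-s}[\varphi]_{C^2(v)}^s$, which is exactly the claimed bound. The auxiliary estimate $[\varphi]_{C^2(v)} \lesssim \|D^2\varphi\|_{L^\infty}$ is immediate from Taylor's theorem. The only delicate point in the whole argument is the symmetry of $K_f$ about $v$ justifying the first-order cancellation; once this is in place, the proof reduces to the standard dyadic split for nonlocal operators of order $2s$.
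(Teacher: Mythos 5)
Your proof is correct and follows essentially the same route as the paper's: split at radius $r$, cancel the first-order term via the kernel symmetry $K_f(t,x,v,v+w)=K_f(t,x,v,v-w)$, control each piece with Proposition~\ref{p:Kupper-bound}, and optimize $r = (\|\varphi\|_{L^\infty}/[\varphi]_{C^2(v)})^{1/2}$. You spell out the verification of the symmetry in a bit more detail than the paper, which simply asserts it, but otherwise the two arguments coincide.
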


\begin{proof}
  We decompose the domain of integration in $Q_s(f,\varphi)$ between
  $B_r(v)$ and $\R^d \setminus B_r(v)$, for an arbitrary radius $r>0$
  to be specified later. Due to the symmetry of the kernel
  $K_f(t,x,v,v+w)=K_f(t,x,v,v-w)$, we have that
  \[
    \PV \int_{B_r(v)} (v'-v) \cdot \nabla \varphi(v) K_f(t,x,v,v') \dd
    v' = 0.
  \]
Therefore
\begin{align*} 
  & \left\vert \PV \int_{B_r(v)} [\varphi(v') - \varphi(v)]
  K_f(t,x,v,v') \dd v' \right\vert \\
  &= \left\vert \int_{B_r(v)} [\varphi(v') - \varphi(v) - (v'-v) \cdot \nabla \varphi(v)] K_f(t,x,v,v') \dd v' \right\vert, \\
  &\leq [\varphi]_{\dot C^2(\R^d)} \int_{B_r(v)} |v'-v|^2 K_f(t,x,v,v') \dd v', \\
  &\lesssim [\varphi]_{\dot C^2(\R^d)} \Lambda(t,x,v) r^{2-2s},
\end{align*}
where we have used Proposition \ref{p:Kupper-bound} in the last line. 

Regarding the rest of the domain $\R^d \setminus B_r(v)$, we have
\begin{align*} 
  \left\vert \int_{\R^d \setminus B_r(v)} [\varphi(v') - \varphi(v)]
  K_f(t,x,v,v') \dd v' \right\vert
  &\leq 2 \|\varphi\|_{L^\infty} \int_{\R^d \setminus B_r(v)} K_f(t,x,v,v') \dd v', \\
  &\lesssim \|\varphi\|_{L^\infty} \Lambda(t,x,v) r^{-2s}.
\end{align*}

Adding both inequalities above, we get
\[
  Q_1(f,\varphi) \lesssim \Lambda(t,x,v) \left( [\varphi]_{C^2(v)}
    r^{2-2s} + \|\varphi\|_{L^\infty} r^{-2s} \right).
\]
We conclude the proof choosing
$r := \left( \|\varphi\|_{L^\infty(\R^d)} / [\varphi]_{\dot C^2(\R^d)}
\right)^{1/2}$.
\end{proof}

\subsection{Pointwise upper bound on the solution}

The following $L^\infty$ bound is one of the main results in
\cite{luis}. We state the slightly refined version
in~\cite[Theorem~4.1]{imbert2018decay}, which includes in particular
the limit case $\gamma+2s=0$.

\begin{prop}[Global upper bound --
  \cite{luis,imbert2018decay}] \label{p:Linfty} Assume
  $\gamma+2s \in [0,2]$. Let $f \geq 0$ be a solution to
  \eqref{eq:boltzmann} according to Definition~\ref{d:solutions} that
  satisfies the hydrodynamic bounds
  (\ref{e:mass-density}-\ref{e:energy-density}-\ref{e:entropy-density})
  for all $t \in [0,T]$ and $x \in \mathbb{T}^d$.

  Then, there exists a non-increasing function $b(t)>0$ on $(0,T]$
  depending on $m_0$, $M_0$, $E_0$ and $H_0$ only so that
  \[
    \forall \, t \in (0,T], \ x \in \T^d, \ v \in \R^d, \quad f(t,x,v)
    \leq b(t).
  \]
\end{prop}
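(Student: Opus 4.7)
The plan is to use a barrier argument, exploiting the fractional-diffusion character of $Q_s$ (an elliptic non-local integral operator of order $2s$) to produce a universal, $x$-independent upper barrier that caps $f$ from above. The barrier is chosen of the form
\[
\Phi(t,v) := A(t)\, \langle v \rangle^{-q}, \qquad \langle v \rangle := (1+|v|^2)^{1/2},
\]
with $q$ chosen large enough that $\langle v \rangle^{-q}$ is integrable against the hydrodynamic bounds of Remark~\ref{rk:Lambda}, and with $A(t)$ an increasing function of $t$ that diverges as $t \to 0^+$ (to accommodate arbitrary initial data) but stays finite for $t \geq t_0 > 0$. Setting $b(t) := A(t)$ will then give the proposition.

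The heart of the argument is a contact-point analysis. Let $t_1 > 0$ be the first time at which $f(t,x,v) \leq \Phi(t,v)$ fails to be strictly satisfied, and let $(x_1,v_1)$ realize the equality. At such a point: (i)~$\nabla_x f(t_1,x_1,v_1)=0$ because $\Phi$ does not depend on $x$, killing the transport term; (ii)~$\partial_t f(t_1,x_1,v_1) \geq \partial_t \Phi(t_1,v_1) = A'(t_1)\langle v_1\rangle^{-q}$; and (iii)~$f(t_1,x_1,v) \leq \Phi(t_1,v)$ for every $v$, with equality at $v=v_1$. Substituting into \eqref{e:Q1only} plus the non-singular part, one gets
\[
A'(t_1)\,\langle v_1\rangle^{-q} \;\leq\; Q_s(f,f)(t_1,x_1,v_1) + Q_{ns}(f,f)(t_1,x_1,v_1).
\]
Because $K_f \geq 0$ and $f \leq \Phi$ with equality at $v_1$, the Carleman representation \eqref{eq:Carl-rep} yields $Q_s(f,f)(v_1) \leq Q_s(f,\Phi)(v_1)$. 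Lemma~\ref{l:Lphi} together with Remark~\ref{rk:Lambda} then bound this by a constant multiple of $\langle v_1 \rangle^{\gamma+2s}\,\|\Phi(t_1,\cdot)\|_{L^\infty}^{1-s}[\Phi(t_1,\cdot)]_{C^2(v_1)}^{s}$, and direct computation on the explicit profile gives $\|\Phi\|_\infty \lesssim A(t_1)$ and $[\Phi]_{C^2(v_1)} \lesssim A(t_1)\langle v_1 \rangle^{-q-2}$. For the non-singular part, $Q_{ns}(f,f)(v_1) = \Phi(t_1,v_1)\,C_S(f*|\cdot|^\gamma)(v_1) \lesssim A(t_1)\langle v_1\rangle^{-q}\langle v_1\rangle^{\gamma}$ using \eqref{e:mass-density}--\eqref{e:energy-density}.

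Collecting these estimates produces, after dividing through by $\langle v_1\rangle^{-q}$, a differential inequality of the form
\[
A'(t_1) \;\leq\; C\, A(t_1)\,\langle v_1 \rangle^{\gamma+2s}\Bigl( \langle v_1\rangle^{-2s} + \langle v_1\rangle^{-\gamma-2s}\Bigr),
\]
where both $\langle v_1\rangle$-factors in the bracket remain bounded provided one selects $q$ large enough (so that the $A^s$ factor comes out as genuinely sublinear in $A$, giving room for a super-linear ODE whose solution blows up only at a prescribed time $t=0$). Thus $A(t)$ can be chosen as an explicit super-solution of this ODE (reminiscent of the $t^{-d/(2s)}$ decay rate of fractional heat kernels), and the contact time $t_1$ cannot actually occur; equivalently, the barrier never fails. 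The main obstacle is the careful interplay between the power $q$ and the weight $\gamma+2s$: one needs $q$ big enough that $\Phi$ is integrable against the hydrodynamic moments (so Remark~\ref{rk:Lambda} applies) and that the contact-point ODE has a non-degenerate, decreasing profile $A(t)$, while still ensuring that the initial comparison $f(0,x,v) \leq \Phi(0,v)$ can be achieved by letting $A(0^+) = +\infty$. This is exactly the delicate choice carried out in \cite{luis,imbert2018decay}, and the borderline case $\gamma + 2s = 0$ is what forces the slight refinement of \cite{imbert2018decay} over \cite{luis}.
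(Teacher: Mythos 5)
The paper does not prove Proposition~\ref{p:Linfty}; it is quoted as a black-box result from \cite{luis} and its refinement in \cite{imbert2018decay}. So the comparison has to be against the argument in those references.

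Your barrier sketch has a genuine gap, and it is not a small one. To make a decreasing barrier $\Phi(t,v)=A(t)\langle v\rangle^{-q}$ (you write ``increasing'' but must mean decreasing, since $A(0^+)=+\infty$ and $A$ finite for $t>0$) work, you must rule out first contact, i.e.\ you must show
\[
A'(t_1)\langle v_1\rangle^{-q} \;>\; Q_s(f,\Phi)(t_1,x_1,v_1) + Q_{ns}(f,f)(t_1,x_1,v_1).
\]
The left-hand side is strictly negative, and $Q_{ns}\geq 0$. So you need $Q_s(f,\Phi)(v_1)$ to be \emph{quantitatively negative}, in fact more negative than $A'(t_1)\langle v_1\rangle^{-q}$. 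But the only tool you invoke, Lemma~\ref{l:Lphi}, is a two-sided \emph{upper} bound on $|Q_s(f,\varphi)|$. It tells you $Q_s(f,\Phi)\geq -(\text{bound})$, i.e.\ it constrains $Q_s$ from \emph{below}, in the direction that is useless for the barrier. After inserting it you arrive at an inequality of the schematic form $A'\leq C A(\ldots)$ with a positive right-hand side, which is automatically true for every decreasing $A$; it delivers no contradiction, and hence no choice of $A$ closes the argument. (Your explicit algebra also has slips: using $\|\Phi\|_{L^\infty}=A$ as Lemma~\ref{l:Lphi} requires, the $Q_s$ term gives $\langle v_1\rangle^{\gamma+q(1-s)}$ after dividing by $\langle v_1\rangle^{-q}$, not $\langle v_1\rangle^{\gamma}$, and the $Q_{ns}$ term gives $\langle v_1\rangle^{\gamma}$, not $1$; but these corrections do not rescue the argument, they only make the factors larger.)

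What is actually missing is the coercivity input that is the heart of \cite{luis}: a \emph{lower} bound on $K_f$ on a ``cone of nondegeneracy'' (established from the mass, energy and entropy bounds), combined with the observation that the mass bound \eqref{e:mass-density} forces $f$ to drop by half within a ball of radius $\approx (M_0/f(v_1))^{1/d}$ around the contact point. Together these yield $Q_s(f,f)(v_1)\leq -c\,f(v_1)^{1+2s/d}$ (up to $\gamma$-dependent weights), which is the genuinely negative, superlinear term that drives the ODE $M'(t)\lesssim -M(t)^{1+2s/d}$ and hence the algebraic-in-$t$ bound $M(t)\leq b(t)$. None of this is reproducible from Lemma~\ref{l:Lphi} alone, which is sign-blind. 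There is also a secondary obstruction to your specific barrier: for $|v|$ large, $Q_s(f,\Phi)(v)$ receives a positive contribution of order $A\,|v|^{-(d+2s)}$ from $v'$ near the origin where $\Phi\approx A$; once $q>d+2s$ this overwhelms $\Phi_t(v)\sim A'|v|^{-q}$, so the barrier is actually a subsolution there. This is precisely why \cite{imbert2018decay} proves pointwise decay only \emph{after} the uniform $L^\infty$ bound is in hand and via an iteration, not through a single polynomial barrier. In short: the structure (contact point, time barrier) is the right flavor, but the key coercivity lemma that produces negativity at the maximum, not an interpolation bound on the magnitude, is the indispensable missing ingredient.
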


\section{Lower bounds}
\label{sec:lower-bounds}

Recall that we prove the appearance of a lower bound on a ball thanks
to a weak Harnack inequality, then spread it iteratively using the
mixing properties of the geometry of collision and coercivity
estimates on the collision operator.

\subsection{Weak Harnack inequality and initial plateau}

In \cite{is}, two of the authors obtain a weak Harnack inequality for
the linear Boltzmann equation. This weak Harnack inequality implies a
local lower bound for the nonlinear Boltzmann equation. It is stated
in the following proposition. Note that Proposition \ref{p:Linfty}
gives us control of $\|f\|_{L^\infty}$ in terms of the other
parameters.
\begin{prop}[Local minoration -- {\cite[Theorem~1.3]{is}}]
  \label{p:localLB}
  %Assume $\gamma+2s \in [0,2]$.
  Let $f \geq 0$ be a solution to
  \eqref{eq:boltzmann} according to Definition~\ref{d:solutions} that
  is $L^\infty$ and satisfies the hydrodynamic bounds
  (\ref{e:mass-density}-\ref{e:energy-density}-\ref{e:entropy-density})
  for all $t \in [0,T]$ and $x \in \mathbb{T}^d$.

  Then, for any $R>0$, there is a nondecreasing function
  $a: (0,\infty) \to (0,\infty)$ depending on $s$, $\gamma$, $d$,
  $m_0$, $M_0$, $E_0$ and $H_0$ and $R$ only, such that
  \[
    \forall \, v \in B_R(0), \quad f(t,x,v) \geq a(t).
  \]
\end{prop}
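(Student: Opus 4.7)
The plan is to deduce Proposition~\ref{p:localLB} by applying the weak Harnack inequality of~\cite{is} to $f$, after checking that the hydrodynamic bounds imply its hypotheses. Since $Q_{ns}(f,f) \geq 0$, the nonnegative function $f$ is a supersolution of the linear kinetic integro-differential equation
\[
\partial_t f + v\cdot\nabla_x f - \PV \int_{\R^d} K_f(t,x,v,v') \bigl(f(v')-f(v)\bigr) \dd v' \geq 0,
\]
with kernel $K_f$ given by~\eqref{eq:K}. The ellipticity-type conditions required by~\cite{is} are supplied by the upper bounds of Proposition~\ref{p:Kupper-bound}, together with Remark~\ref{rk:Lambda} which controls $\Lambda(t,x,v) \lesssim (1+|v|)^{\gamma+2s}$, and the pointwise $L^\infty$ bound $f \leq b(t)$ from Proposition~\ref{p:Linfty}.

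The second ingredient is a lower bound on the mass of $f$ over a ball in velocity. Combining~\eqref{e:mass-density} with Markov's inequality applied to~\eqref{e:energy-density} gives $\int_{B_{R_0}} f(t,x,v) \dd v \geq m_0/2$ for $R_0 := \sqrt{2E_0/m_0}$, uniformly in $(t,x) \in [0,T]\times\T^d$. Integrating this over a small time interval and a ball in $x$ then yields a positive lower bound on the integral of $f$ over the corresponding phase-space cylinder, depending only on $m_0$, $E_0$, and the size of the cylinder.

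Feeding these two inputs into the weak Harnack inequality of~\cite{is} produces a constant $a_0(t)>0$ such that $f(t,x,v) \geq a_0(t)$ at any prescribed point $(t,x,v_0)$ with $v_0$ in a fixed ball. To cover the whole ball $v \in B_R$ for arbitrary $R>0$, I would exploit the Galilean covariance of the kinetic transport operator $\partial_t + v\cdot\nabla_x$ to translate the weak Harnack to each base point $v_0 \in B_R$; the resulting constants depend on $R$ but remain uniformly controlled on the compact set $B_R$, so a finite covering argument then produces the single function $a(t)$ appearing in the statement.

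The main obstacle will be verifying that the nondegeneracy hypotheses of~\cite{is} persist with $R$-dependent constants. The upper bounds in Proposition~\ref{p:Kupper-bound} degenerate only polynomially in $|v|$ via $\Lambda$, which is harmless, but the matching \emph{lower} bounds on $K_f$ needed to establish genuine order-$2s$ ellipticity on a cone of directions rely crucially on the entropy bound~\eqref{e:entropy-density} to rule out concentration of $f$; this is a core technical point of~\cite{is}, which would be invoked here as a black box rather than reproved.
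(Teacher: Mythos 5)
The paper does not actually prove Proposition~\ref{p:localLB}: it cites it verbatim as~\cite[Theorem~1.3]{is}, with the only added observation being that Proposition~\ref{p:Linfty} furnishes the $L^\infty$ control required by that theorem's hypotheses. Your proposal is therefore essentially a reconstruction of the internal argument of~\cite{is}, not an alternative route. The ingredients you list --- the supersolution observation via $Q_{ns}\geq 0$, the Markov-inequality lower bound on the mass in a fixed velocity ball, the upper bounds on $K_f$ together with the $L^\infty$ bound, and a translation/covering argument to pass from a single base point to an arbitrary ball $B_R$ --- are the correct ones and do appear in the cited work. You are also right to flag, and treat as a black box, the hardest point: the genuine order-$2s$ nondegeneracy (cone-of-ellipticity lower bound) on $K_f$, which is what the entropy bound~\eqref{e:entropy-density} buys and which is the technical heart of~\cite{is}. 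In that sense your sketch is consistent with the paper, and does not introduce anything erroneous, but it does not constitute an independent proof: once you invoke the kernel lower bounds and the weak Harnack inequality of~\cite{is} as black boxes, the derivation collapses to exactly the citation the paper already makes. If you wanted to turn this into a genuine proof rather than a commentary on the citation, you would need to reproduce (or at least precisely state) the nondegeneracy estimate on $K_f$ and the form of the weak Harnack inequality, and verify their hypotheses explicitly; as written, the proposal is a correct reading guide to~\cite{is}, which is all the paper itself provides.
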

\begin{remark}\label{rk:positive}
  Note that this result implies, in particular, that any solutions as
  in the statement satisfies $f(t,x,v) > 0$ for every
  $(t,x,v) \in (0,T] \times \mathbb T^d \times \R^d$.
\end{remark}
\begin{remark}
  \label{rk:very-singular}
  Note that this result holds for $\gamma+2s<0$ conditionally to the
  $L^\infty$ bound. However this $L^\infty$ bound is proved only when
  $\gamma+2s \in [0,2]$ (see~\cite{luis,imbert2018decay}). 
\end{remark}

\subsection{Spreading lemma around zero}

% The following lemma will be used to spread the lower bound from
% Proposition~\ref{p:localLB} to larger values of $v$.
% 
\begin{lemma}[Spreading around zero]
  \label{l:unit-spread}
  Consider $T_0 \in (0,1)$. Let $f\geq 0$ be a supersolution of
  \eqref{e:Q1only} so that
  (\ref{e:mass-density}-\ref{e:energy-density}) hold, and such that
  $f \ge \ell \, {\bf 1}_{v \in B_R}$ on $[0,T_0]$ for some $\ell >0$
  and $R \geq 1$. 

  Then, there is a constant $c_s >0$ depending only on $d$, $s$,
  $M_0$, $E_0$ (but not on $m_0$ or $H_0$) such that for any
  $\xi \in (0,1-2^{-1/2})$ so that $\xi^{q} R^{d+\gamma} \ell < 1/2$ with
  $q=d+2(\gamma+2s+1)$, one has
  \[
    \forall \, t \in [0,T_0], \ x \in \T^d, \ v \in B_{\sqrt 2 (1-\xi) R}, \quad
    f(t,x,v) \ge c_s \xi^{q} R^{d+\gamma} \ell^2
    \min\left(t, R^{-\gamma} \xi^{2s}\right).
  \]
\end{lemma}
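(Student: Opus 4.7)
The strategy is a comparison argument using a carefully chosen smooth lower barrier. I would seek $\varphi(t,v) = \alpha(t)\chi_\xi(v)$ with $\chi_\xi$ a nonnegative $C^2$ bump function whose shape, height and support are tuned to the parameters $R,\ell,\xi$, and $\alpha:[0,T_0]\to[0,\infty)$ a continuous nondecreasing function with $\alpha(0)=0$. The goal is to prove $f\geq\varphi$ pointwise on $[0,T_0]\times\T^d\times\R^d$. Choosing $\chi_\xi$ so that $\alpha(t)\chi_\xi\leq\ell/2$ on $B_R$ (which is precisely what the smallness condition $\xi^{q}R^{d+\gamma}\ell<1/2$ enables) prevents any touching point from occurring inside $B_R$, while $\alpha(0)=0$ makes the inequality trivial at $t=0$.

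Assuming for contradiction that $f-\varphi$ has a first zero at some $(t_0,x_0,v_0)$ with $v_0\notin B_R$, the minimum conditions give $\partial_t f \leq \alpha'(t_0)\chi_\xi(v_0)$, $\nabla_x f=0$, and $f\geq\varphi$ everywhere in $v$ with equality at $v_0$. Since $K_f\geq 0$, the Carleman representation \eqref{eq:Carl-rep} yields
\[
Q_s(f,f)(t_0,x_0,v_0)\geq Q_s(f,\varphi)(t_0,x_0,v_0)=\alpha(t_0)\,Q_s(f,\chi_\xi)(t_0,x_0,v_0),
\]
and the supersolution inequality \eqref{e:Q1only} then forces
\[
\alpha'(t_0)\chi_\xi(v_0)\geq\alpha(t_0)\,Q_s(f,\chi_\xi)(t_0,x_0,v_0).
\]

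The heart of the proof is a lower bound for $Q_s(f,\chi_\xi)$ at the touching point, split into a positive (gain) contribution and a singular (loss) contribution. For the gain we exploit the structure of $K_f$: when $v'-v_0$ lies in the cone of directions such that the hyperplane $v_0+(v'-v_0)^\perp$ cuts through $B_R$ in a $(d-1)$-dimensional slice of large measure, the hypothesis $f\geq\ell\,\mathbf{1}_{B_R}$ yields
\[
K_f(v_0,v')\gtrsim\frac{\ell\,R^{d-1}\,|v_0-v'_*|^{\gamma+2s+1}}{|v'-v_0|^{d+2s}}
\]
for a representative $v'_*$ in the intersection. Integrating this against the positive oscillation of $\chi_\xi$ over the image of the cone produces a source of size $\gtrsim\xi^{q}R^{d+\gamma}\ell$, the exponent $q=d+2(\gamma+2s+1)$ tracking the geometric and scaling weights. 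For the loss part --- coming from the principal value near $v_0$ and the contribution $-\chi_\xi(v_0)\,\PV\int K_f\,\dd v'$ --- the $C^2$ regularity of $\chi_\xi$ combined with Proposition~\ref{p:Kupper-bound} and Lemma~\ref{l:Lphi} yields an upper bound of order $\Lambda(v_0)\|\chi_\xi\|_\infty^{1-s}[\chi_\xi]_{C^2}^s$, in the spirit of the barrier method of \cite{silvestre2014regularity}. Collecting both, one obtains a differential inequality
\[
\alpha'(t)\gtrsim\xi^{q}R^{d+\gamma}\ell^2-C(\xi,R)\,\alpha(t),
\]
whose integration yields $\alpha(t)\gtrsim c_s\,\xi^{q}R^{d+\gamma}\ell^2\min\left(t,R^{-\gamma}\xi^{2s}\right)$, the saturation time appearing as the ratio of loss and gain constants.

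The main obstacle will be the careful joint design of $\chi_\xi$ and the identification of the ``good cone'' feeding the gain: the positive oscillation of $\chi_\xi$ must be placed exactly in the image of the cone of directions through $v_0$ that meets $B_R$, a cone which narrows as $|v_0|$ grows, and this must be arranged through a geometric construction producing the claimed bound uniformly across $v_0$. The parameter restriction $\xi<1-2^{-1/2}$ reflects the $\sqrt 2$ spread of post- versus pre-collisional velocities inherent in the binary collision geometry and is precisely the threshold that guarantees the gain term dominates the loss in the critical regime. Managing the fractional singularity of $K_f$ via the second-order cancellation of $\chi_\xi$ in the principal value, rather than treating it as an error term, is the key technical novelty compared to the cut-off argument of \cite{MR2153518}.
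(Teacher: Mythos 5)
Your overall strategy---a smooth, time-growing barrier, a contradiction at the first crossing point, and the kernel estimates of Proposition~\ref{p:Kupper-bound} and Lemma~\ref{l:Lphi}---matches the paper's, but there is a decisive bookkeeping error in the heart of the argument that makes your version collapse.

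At the touching point you write $Q_s(f,f)\geq Q_s(f,\varphi)=\alpha(t_0)\,Q_s(f,\chi_\xi)$, discarding the nonnegative term $Q_s(f,f-\varphi)(v_0)\geq 0$. This is exactly the term you cannot afford to throw away: it is the only source of a \emph{gain independent of $\alpha(t_0)$}. Once you discard it, your touching-point inequality has the form $\alpha'(t_0)\chi_\xi(v_0)\geq\alpha(t_0)\,Q_s(f,\chi_\xi)(v_0)$, which, no matter how you split $Q_s(f,\chi_\xi)$ into gain and loss, is a homogeneous ODE inequality $\alpha'\geq\alpha\cdot(\cdot)$; with $\alpha(0)=0$ this forces $\alpha\equiv 0$ and gives no lower bound at all. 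The differential inequality you ultimately write down, $\alpha'\gtrsim\xi^qR^{d+\gamma}\ell^2-C(\xi,R)\alpha$, has an $\alpha$-free source term, so it cannot be obtained from the derivation you proposed: there is no way to turn an $\alpha(t_0)\cdot\ell$ quantity into $\ell^2$. Moreover, the ``positive oscillation of $\chi_\xi$'' you invoke does not exist at the touching point: since $\xi<1-2^{-1/2}$ implies $B_R\subset B_{\sqrt 2 R(1-\xi)}$ where $\chi_\xi\equiv 1$, the touching point $v_0$ (which lies in $\supp\chi_\xi\setminus B_R$) is at or near a maximum of $\chi_\xi$, so $\chi_\xi(v')-\chi_\xi(v_0)\leq 0$ for most $v'$ and $Q_s(f,\chi_\xi)(v_0)$ is typically nonpositive---there is no positive contribution to extract from it.

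The paper keeps the term you discarded and lower-bounds it quantitatively. Writing $Q_s(f,f)=Q_s(f,f-\tilde\ell\varphi)+\tilde\ell\,Q_s(f,\varphi)$, the second term is the loss, controlled by Lemma~\ref{l:Lphi} as $\tilde\ell\cdot O(R^\gamma\xi^{-2s})$; the first term is a nonnegative integral of $[f(v')-\tilde\ell\varphi(v')]K_f(v_0,v')$, and its lower bound comes from restricting to $v',v'_*\in B_R$, where simultaneously $f(v'_*)\geq\ell$ and $f(v')-\tilde\ell\varphi(v')\geq\ell-\tilde\ell\geq\ell/2$ thanks to the smallness hypothesis. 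This produces the $\ell^2\xi^qR^{d+\gamma}$ gain \emph{without any factor of $\tilde\ell(t_0)$}, which is what makes the ODE $\tilde\ell'\gtrsim\xi^qR^{d+\gamma}\ell^2-CR^\gamma\xi^{-2s}\tilde\ell$ produce a nontrivial lower bound from $\tilde\ell(0)=0$. To repair your proof you would need to undo the step $Q_s(f,f)\geq Q_s(f,\varphi)$ and instead carry and estimate $Q_s(f,f-\varphi)$ exactly as in the paper.
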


The proof of Lemma \ref{l:unit-spread} combines, at its core, the
spreading argument of the cut-off case that goes back to Carleman
\cite{MR1555365}, used here in the form developed in \cite{MR2153518},
coercivity estimates on the collision operator at grazing collisions
developed in \cite{luis,is,imbert2018decay}, and finally a barrier
argument similar to \cite[Theorem 5.1]{silvestre2014regularity}.

\begin{proof}[Proof of Lemma~\ref{l:unit-spread}]
  Given $\xi \in (0,1-2^{-1/2})$ as in the statement, consider a
  smooth function $\varphi_\xi$ valued in $[0,1]$ so that
  $\varphi_\xi =1$ in $B_{\sqrt 2(1-\xi)}$ and $\varphi_\xi =0$
  outside $B_{\sqrt 2 (1-\xi/2)}$ and
  $\|D^2 \varphi_\xi\|_{L^\infty (\R^d)} \lesssim \xi^{-2}$. Define
  $\varphi_{R,\xi} (v) := \varphi_\xi(v/R)$. Observe that
  $\|\varphi_{R,\xi}\|_{L^\infty(\R^d)} = 1$ and
  $\|D^2 \varphi_{R,\xi}\|_{L^\infty(\R^d)} \lesssim (R \xi)^{-2}$.

  Apply Lemma \ref{l:Lphi} and Remark~\ref{rk:Lambda} to get
  \begin{equation}\label{e:Q1phi}
    \left|Q_1(f,\varphi_{R,\xi})\right| \lesssim \Lambda(t,x,v) (R
    \xi)^{-2s} \lesssim_{M_0,E_0} R^{\gamma+2s} (R \xi)^{-2s}
    \lesssim_{M_0,E_0} R^\gamma \xi^{-2s}. 
  \end{equation}
  
  Define the following barrier function
  \[
    \tilde \ell(t) := \alpha \xi^{q} R^{d+\gamma}
    \ell^2 \left( \frac{1-e^{-C R^\gamma \xi^{-2s} t}}{C R^\gamma
        \xi^{-2s}} \right) 
  \]
  for some $\alpha \in (0,1)$ to be chosen small enough later
  % depending on the same quantities as $c_s$ in the statement of the
  % proposition, where
  and where $C \ge 1$ is the constant in~\eqref{e:Q1phi} depending on
  $M_0$, $E_0$. Our goal is to prove that
  \[
    \forall \, t \in [0,T_0], \ x \in \T^d, \ v \in \R^d, \quad
    f \ge \tilde \ell(t) \varphi_{R,\xi}.
  \]

  % In the remainder of the proof, the function $\varphi_{R,\xi}$ is
  % simply denoted by $\varphi$. 
  Thanks to Remark~\ref{rk:positive} we can assume that $f>0$
  everywhere on $[0,T_0] \times \T^d \times \R^d$: it is true for any
  $t \in (0,T_0]$ and we can shift the solution $f$ in time by
  $f(t+\eps,x,v)$ and prove the lower bound independently of $\eps>0$.

  Let us prove that $f(t,x,v) > \tilde \ell(t) \varphi_{R,\xi}(v)$ for
  all $t \in [0,T_0] \times \mathbb T^d \times \R^d$. This inequality
  holds at $t=0$ since $\tilde \ell(0) = 0$. If the inequality was not
  true, then there would be a first crossing point
  $(t_0,x_0,v_0) \in [0,T_0] \times \mathbb T^d \times \supp \varphi$
  (the crossing point cannot be outside the support of $\varphi$ since
  $f>0$) so that
  $f(t_0,x_0,v_0) = \tilde \ell(t_0) \varphi_{R,\xi}(v_0)$ and
  $f(t,x,v) \ge \tilde \ell(t) \varphi_{R,\xi}(v)$ for all
  $t \in [0,t_0]$, $x \in \T^d$, $v \in \R^d$.

  The smallness condition imposed on $\xi$ in the statement implies
  that $\tilde \ell(t) \le \ell/2$ for all $t \in [0,T_0]$, and thus
  $v_0 \notin B_R$ since $\varphi \equiv 1$ and $f \ge \ell$ in
  $B_R$. Moreover $v_0 \in B_{\sqrt 2 R (1-\xi/2)}$ since $\varphi=0$
  outside the latter ball. The contact point also satisfies the
  extremality and monotonocity (in time) conditions
  $\nabla_x f(t_0,x_0,v_0) = 0$ and
  $\tilde \ell'(t_0) \varphi_{R,\xi}(v_0) \ge \partial_t
  f(t_0,x_0,v_0)$.

  Using the fact that $f$ is a supersolution of \eqref{e:Q1only}, we
  thus get
  \begin{equation}
    \label{e:barrier-init}
    \tilde \ell'(t_0) \varphi_{R,\xi}(v_0) \ge Q_s(f,f)(t_0,x_0,v_0).
  \end{equation}

  We decompose $Q_s(f,f)$ as
  \begin{align*} 
    & Q_s(f,f)(t_0,x_0,v_0)
    = Q_s\left(f,f-\tilde \ell(t) \varphi_{R,\xi}\right)(t_0,x_0,v_0)
      + \tilde \ell(t) Q_s\left(f,\varphi_{R,\xi}\right)(t_0,x_0,v_0) , \\ 
    &= \int_{\R^d}
      \left[f(t_0,x_0,v')- \tilde \ell(t_0) \varphi_{R,\xi}(v')
      \right] K_f(t_0,x_0,v_0,v') \dd v' +
      \tilde \ell(t_0) Q_s\left(f,\varphi_{R,\xi}\right)(t_0,x_0,v_0).  
  \end{align*}
  We omit $t_0,x_0$ in $f$ and $K_f$ from now on to unclutter
  equations. The barrier satisfies
  \[
    \tilde \ell'(t) = \alpha \xi^q R^{d+\gamma} \ell^2 - C
    R^\gamma \xi^{-2s} \tilde \ell(t),
  \]
  and plugging the last equation into \eqref{e:barrier-init}, and
  using \eqref{e:Q1phi}, gives
  \[
    \alpha \xi^q R^{d+\gamma} \ell^2 - C \xi^{-2s} R^\gamma \tilde
    \ell(t_0) \ge \int_{\R^d} \left[f(v')- \tilde \ell(t_0)
      \varphi_{R,\xi}(v') \right] K_f(v_0,v') \dd v'- C \xi^{-2s}
    R^\gamma \tilde \ell(t_0).
  \]
  We cancel out the last term and obtain
  \begin{align*} 
    & \alpha \xi^q R^{d+\gamma}  \ell^2 \gtrsim\\
    & \int_{\R^d} \int_{v_*' \in \, v_0 + (v'-v_0)^\perp}
      \left[f(v') - \tilde
      \ell(t_0) \varphi_{R,\xi}(v') \right] f(v'_*)
      \frac{|v'-v'_*|^{\gamma+2s+1}}{|v_0-v'|^{d+2s}} \tilde b(\cos
      \theta) \dd v_*' \dd v'.
  \end{align*}

  Since the integrand is non-negative, we bound the integral from
  below by restricting the domain of integration to $v'\in B_R$ and
  $v'_* \in B_R$ (balls centered at zero):
 \begin{align*}
   & \alpha \xi^q R^{d+\gamma}  \ell^2 \gtrsim \\
   & \int_{v' \in B_R} \int_{v'_* \in \, v_0 + (v'-v_0)^\bot}
     {\bf 1}_{B_R}(v'_*) \left[f(v')- \tilde
     \ell(t_0) \varphi_{R,\xi}(v') \right] f(v'_*)
     \frac{|v'-v'_*|^{\gamma+2s+1}}{|v_0-v'|^{d+2s}} \tilde b(\cos \theta)
     \dd v_\ast' \dd v'.
 \end{align*}
 On this domain of integration, we have $f(v'_\ast) \geq \ell$, and
 the assumption $\xi^{q} R^{d+\gamma} \ell < 1/2$ implies that
 $f(v')- \tilde \ell(t_0) \varphi_{R,\xi}(v') \geq \ell - \tilde \ell
 \geq \ell/2$, thus
\begin{equation*}
  \alpha \xi^q R^{d+\gamma} \ell^2
  \gtrsim \ell^2
  R^{-d-2s} \int_{v' \in B_R} \int_{v'_* \in \, v_0 + (v'-v_0)^\bot} {\bf 1}_{B_R}(v'_*)
  |v'-v'_*|^{\gamma+2s+1} \tilde b (\cos \theta)  \dd v_\ast' \dd v'.
\end{equation*}
Observe that since $|v_0| \in [R,\sqrt 2 R (1-\xi/2)]$, the volume of
$v' \in B_R$ such that the distance between $0$ and the line $(vv')$
is more than $R(1-\xi/2)$, is $O(R^d \xi^{(d+1)/2})$. For $v'$ in this
region $\mathcal{C}_R$ (see shaded region in Figure~\ref{f:1}) the
$(d-1)$-dimensional volume of $v'_* \in B_R$ such that
$ (v'_* - v_0) \perp (v'-v_0)$ is $O(R^{d-1} \xi^{(d-1)/2})$. Finally
removing the $v' \in B_{\xi^2 R}(v_0)$ (which does not change the
volume estimates above) ensures that $|v'-v'_\ast| \ge \xi^2
R$. Therefore,
\begin{equation*}
  \alpha \xi^q R^{d+\gamma} \ell^2
  \gtrsim \ell^2 R^{-d-2s} \int_{v' \in \mathcal{C}_{R}}
  \int_{v' \in \mathcal{C}_{R}^\ast} |v'-v'_*|^{\gamma+2s+1}
  \tilde b (\cos \theta) \dd v_\ast' \dd v' 
  \gtrsim  \xi^q R^{d+\gamma} \ell^2
\end{equation*}
with $q=d+2(\gamma+2s+1)$, and where we have used the deviation angles
$\theta \sim \pi/2$ (non-grazing collisions) for which $\tilde b$ is
positive.

\begin{figure}[hbt] 
\setlength{\unitlength}{1in} 
\begin{picture}(2.926 ,3.000)
\put(0,0){\includegraphics[height=3.000in]{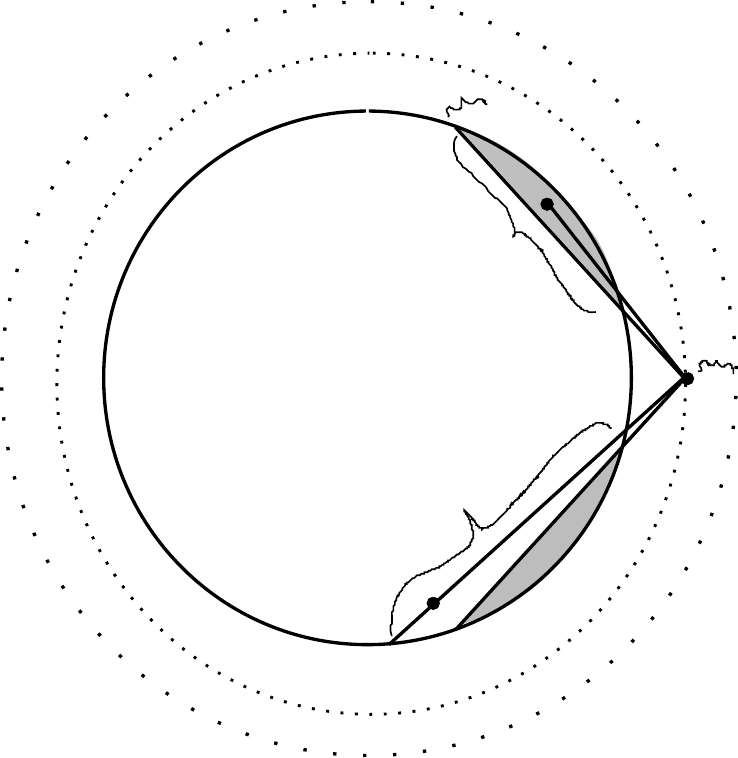}}
\put(2.151,2.269){$v'$}
\put(1.7,0.5){$v'_\ast$}
\put(2.73,1.411){$v_0$}
\put(0.771,0.785){$B_R$}
\put(0.217,0.272){$B_{\sqrt 2 R}$}
\put(1.487,2.621){$\approx R \xi$}
\put(2.636,1.637){$\sqrt 2 R \xi$}
\put(1.586,1.872){$\approx R \sqrt{\xi}$}
\put(1.518,1.088){$\approx R \sqrt{\xi}$}
\end{picture}
\caption{The binary collisions spreading the lower bound.}
\label{f:1}
\end{figure}

We have hence finally obtained the inequality
  \[
    \alpha \xi^q R^{d+\gamma} \ell^2 \ge \beta \xi^q
    R^{d+\gamma} \ell^2
  \]
  for some $\beta>0$ independent of the free parameter
  $\alpha \in (0,1)$, which is absurd.

  The resulting lower bound is 
  \begin{align*} 
    & \forall \, t \in [0,T_0], \ x \in \T^d, \ v \in B_{\sqrt 2(1-\xi) R}, \\
    & f(t,x,v)
     \ge \tilde \ell(t) \varphi_{R,\xi}(v) \\
    & \qquad \quad \ \, \, \geq \alpha \xi^q R^{d+\gamma} \ell^2
      \frac{1-e^{-C R^\gamma \xi^{-2s} t/2}}{CR^\gamma \xi^{-2s}}
    \ge c \xi^q R^{d+\gamma} \ell^2 \min\left(t, R^{-\gamma} \xi^{2s}\right)
\end{align*}
which concludes the proof.
\end{proof}

\begin{remark}
  The estimate in Lemma \ref{l:unit-spread} is most likely not optimal
  in terms of the power of $\xi$, as one could estimate better the
  factor $ |v'-v'_*|^{\gamma+2s+1} |v_0-v'|^{-d-2s} $. However we do not
  search for optimality here since the power in $\xi$ plays no role in
  the proof of Proposition \ref{prop:gaussian} below.
\end{remark}

\subsection{Proof of the Gaussian lower bound}
Theorem \ref{t:main} is a direct consequence of the following
proposition.
\begin{prop}[Gaussian lower bound]
  \label{prop:gaussian}
  Consider $T_0 \in (0,1)$. Assume that
  $f: [0,T_0] \times \mathbb T^d \times \R^d \to [0,+\infty)$ is a
  solution to \eqref{eq:boltzmann} according to
  Definition~\ref{d:solutions} that satisfies the hydrodynamic bounds
  (\ref{e:mass-density}-\ref{e:energy-density}-\ref{e:entropy-density})
  for all $t \in [0,T_0]$ and $x \in \mathbb{T}^d$.

  Then there are $a,b>0$ depending on $d$, $s$, $m_0$, $M_0$, $E_0$,
  $H_0$ and $T_0$ so that
  \[
    \forall \, x \in \T^d, v \in \R^d, \quad f(T_0,x,v) \ge a
    e^{-b|v|^2}.
  \]
\end{prop}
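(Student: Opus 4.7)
The plan is to iterate Lemma~\ref{l:unit-spread} on a telescoping sequence of time intervals of lengths summing to $T_0/2$, starting from the initial plateau of Proposition~\ref{p:localLB}, and to track the pair $(R_n, \ell_n)$ so that $R_n^2 \asymp 2^n$ while $-\log \ell_n \lesssim 2^n$, matching a Gaussian profile in the limit.

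For the setup, I apply Proposition~\ref{p:localLB} with $R = 1$ on $[T_0/2, T_0]$ to produce $\ell_0 > 0$ with $f \ge \ell_0 \mathbf{1}_{B_1}$ there (reducing $\ell_0$ if needed so that the smallness condition of Lemma~\ref{l:unit-spread} holds at $n = 0$). Set $R_0 := 1$, $\tau_0 := T_0/2$, and fix a summable sequence $\xi_n := \eta\, 2^{-n} \in (0, 1 - 2^{-1/2})$ with $\eta$ small. Define $R_{n+1} := \sqrt{2}(1 - \xi_n) R_n$ and time-steps $\delta_n := c\, R_n^{-\gamma} \xi_n^{2s}$ with $c$ so small that $\sum_n \delta_n \le T_0/2$; the summability follows from $\gamma + 4s > 0$, which itself is guaranteed by $\gamma + 2s \ge 0$ and $s > 0$. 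Summability of $\xi_n$ yields $c_* := \prod_n (1 - \xi_n) > 0$, hence $c_*\, 2^{n/2} \le R_n \le 2^{n/2}$. Finally set $\tau_{n+1} := \tau_n + \delta_n$, so $\tau_n \nearrow \tau_\infty \le T_0$.

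The inductive step: suppose $f \ge \ell_n \mathbf{1}_{B_{R_n}}$ on $[\tau_n, T_0] \times \T^d$ with $\xi_n^q R_n^{d+\gamma} \ell_n < 1/2$. I apply Lemma~\ref{l:unit-spread} to the time-shifted supersolution $\tilde f(s, x, v) := f(\tau_n + s, x, v)$ on $[0, T_0 - \tau_n]$ with $(R, \xi, \ell) = (R_n, \xi_n, \ell_n)$ and evaluate at $s = \delta_n$; by construction $\min(\delta_n, R_n^{-\gamma} \xi_n^{2s}) = \delta_n$, which gives
\[
  f \ge \ell_{n+1} \mathbf{1}_{B_{R_{n+1}}} \text{ on } [\tau_{n+1}, T_0] \times \T^d, \qquad \ell_{n+1} := c'\, \xi_n^{q+2s} R_n^d \ell_n^2.
\]
A direct exponent computation shows that both the smallness factor $\xi_n^q R_n^{d+\gamma}$ and the iteration prefactor $\xi_n^{q+2s} R_n^d$ decay geometrically in $n$, so combined with the (double exponential) decrease of $\ell_n$ the smallness condition propagates. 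Setting $c_n := -\log \ell_n$, the recursion reads $c_{n+1} \le 2 c_n + A n + B$ for explicit $A, B > 0$, and an easy induction yields $c_n \le (c_0 + C)\, 2^n$ for a constant $C > 0$ depending only on the listed parameters. Combined with $R_n^2 \ge c_*^2 \cdot 2^n$, this produces $\ell_n \ge e^{-b R_n^2}$ with $b := (c_0 + C)/c_*^2$.

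For the conclusion, given $v \in \R^d$ with $|v| \ge 1$, pick $n \ge 0$ so that $R_n \le |v| < R_{n+1} \le \sqrt{2}\, R_n$; then $f(T_0, x, v) \ge \ell_{n+1} \ge e^{-b R_{n+1}^2} \ge e^{-2b |v|^2}$, and for $|v| < 1$ the plateau suffices directly. The main obstacle is that $\sqrt{2}(1 - \xi_n)$ is \emph{strictly} less than $\sqrt{2}$, so any fixed-$\xi$ iteration would give $R_n^2 \ll 2^n$ while the quadratic step $\ell \mapsto \ell^2$ forces $-\log \ell_n \gtrsim 2^n$, producing super-Gaussian tails. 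The resolution, built into the scheme above, is to let $\xi_n \to 0$ summably so that $R_n^2$ asymptotically doubles, while observing that the blowing-up prefactor $\xi_n^{-q}$ contributes only $O(n)$ to $\log \ell_n^{-1}$ and is harmlessly absorbed by the dominant $2 c_n$ term in the recursion.
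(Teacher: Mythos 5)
Your proof is correct and follows essentially the same iteration scheme as the paper's: both track $R_n \asymp 2^{n/2}$ (via $\xi_n \asymp 2^{-n}$) and $-\log\ell_n \lesssim 2^n$ by iterating Lemma~\ref{l:unit-spread}, and match these to obtain the Gaussian tail. Your choices of adaptive time steps $\delta_n = c R_n^{-\gamma}\xi_n^{2s}$ (instead of the paper's fixed $T_n=(1-2^{-n})T_0$) and of $\xi_n=\eta\,2^{-n}$ with $\eta$ small (which, incidentally, avoids the paper's slight slip of taking $\xi_0=1/2$, which exceeds $1-2^{-1/2}$) are minor cosmetic variations, and your explicit final matching of $R_n$ to $|v|$ is simply a spelled-out version of the paper's closing remark.
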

\begin{proof}
  Define the following sequences:
  \begin{equation*}
    \left\{
      \begin{array}{l}
        T_n := \left(1-\frac{1}{2^{n}} \right) T_0, \quad n \ge 1,\\[3mm]
        \xi_n = \frac{1}{2^{n+1}}, \quad n \ge 1,\\[3mm]
        R_{n+1} = \sqrt 2 (1-\xi_n) R_{n}, \quad n \ge 1, \quad R_0=1.
      \end{array}
    \right.
  \end{equation*}
  Observe that $2^{n/2} \lesssim R_n \le 2^{n/2}$ since
  $\Pi_{n=1}^{+\infty}(1-2^{-n}) < +\infty$.

  Proposition~\ref{p:localLB} implies that $f \ge \ell_0$ for
  $t \in [T_0/2,T_0]=[T_1,T_0]$, $x \in \T^d$, $v \in B_1=B_{R_0}$,
  and for some $\ell_0>0$, which initialises our induction.
  
  We then construct inductively a sequence of lower bounds $\ell_n>0$
  so that $f \geq \ell_n$ for $t \in [T_{n+1},T_0]$, $x \in \T^d$ and
  $v \in B_{R_n}$. We apply Lemma~\ref{l:unit-spread} repeatedly to
  obtain the successive values of $\ell_n$. Observe that
  $\xi_n^{q} R_n^{d+\gamma} \ell_n < (2^{-n})^{q - (d+\gamma)/2} <
  1/2$, so the smallness assumption on $\xi$ of Lemma
  \ref{l:unit-spread} holds through the iteration. The sequence of
  lower bounds $\ell_n$ satisfies the induction
\begin{align*} 
  \ell_{n+1}
  &= c_s \xi_n^{q} R_n^{d+\gamma} \ell_n^2
    \min \left(T_{n+1}-T_{n}, R_n^{-\gamma} \xi_n^{2s}\right), \\
  &= c_s \xi_n^{q} R_n^{d+\gamma} \ell_n^2 \min \left(2^{-n-1} T_0,
    R_n^{-\gamma} \xi_n^{2s}\right)
    \geq c 2^{-Cn} \ell_n^2 T_0
\end{align*}
for some constants $c,C>0$, which results in $\ell_n \ge u^{2^n}$ for
some $u \in (0,1)$. This implies the Gaussian decay.
\end{proof}
 
\begin{remark}
  Note that the proof of Lemma \ref{l:unit-spread} applies just as
  well in the cut-off case when $b$ is integrable, and covers actually
  all physical interactions. This is a manifestation of the fact that
  the collisions used to spread the lower bound are those with
  non-grazing angles $\theta \sim \pi/2$. In our notation, the
  short-range interactions correspond to $s<0$. The most important
  such short-range interaction is that of hard spheres in dimension
  $d=3$, corresponds to $\gamma=1$ and $s=-1$. Proposition
  \ref{p:localLB} is taken from \cite{is}, which applies exclusively
  to the non-cutoff case. In the proof of Theorem \ref{t:main}, we
  used Proposition \ref{p:localLB} to establish the lower bound in the
  initial ball $B_1$. This initial step would be different in the
  cut-off case. The estimates in the rest of the iteration carry
  through and the conclusion does not depend on $s$ being positive.
\end{remark}

\bibliographystyle{plain}
\bibliography{lower}
\end{document}